\documentclass[11pt,reqno]{amsart}
\usepackage[margin=1.15in]{geometry}
\usepackage{amsmath,amssymb,amsthm,mathtools,booktabs,array}
\usepackage{enumitem}
\usepackage{hyperref}
\hypersetup{colorlinks=true,linkcolor=blue,citecolor=blue,urlcolor=blue}

\DeclareMathOperator{\sn}{sn}\DeclareMathOperator{\cn}{cn}
\DeclareMathOperator{\dn}{dn}
\DeclareMathOperator{\am}{am}
\newcommand{\dd}{\,\mathrm{d}}
\newcommand{\RR}{\mathbb{R}}\newcommand{\ZZ}{\mathbb{Z}}
\newcommand{\PP}{\ensuremath{\mathrm{P}}}
\newcommand{\DD}{\ensuremath{\mathrm{D}}}
\newcommand{\Lat}{\mathcal{L}}

\theoremstyle{plain}
\newtheorem{theorem}{Theorem}[section]
\newtheorem{proposition}[theorem]{Proposition}

\newtheorem{lemma}[theorem]{Lemma}
\theoremstyle{definition}
\newtheorem{remark}[theorem]{Remark}
\newtheorem{problem}[theorem]{Problem}
\numberwithin{equation}{section}

\title[The two cubic separable minimal surfaces]
{The two cubic separable minimal surfaces:\\
$\;\mathcal D_1\mathcal D_2\mathcal D_3=1\;$ and $\;
\mathcal C_1+\mathcal C_2+\mathcal C_3=3\,\mathcal C_1\mathcal C_2\mathcal C_3$}
\author{Steven Finch}
\address{MIT Sloan School of Management, Cambridge, MA 02139, USA}
\email{steven\_finch\_math@outlook.com}
\date{\today}
\subjclass[2020]{Primary 53A10; Secondary 33E05, 53C42}
\keywords{Triply periodic, Schwarz D surface, Schwarz diamond surface, Schwarz P surface, Schwarz primitive surface, separable minimal surface, Jacobi elliptic functions, conjugate minimal surfaces}

\begin{document}
\begin{abstract}
A minimal surface in $\RR^{3}$ is \emph{separable} if it is the zero set
of $f(x)+g(y)+h(z)$, and \emph{isotropic} if moreover $f=g=h$. We
announce that there are exactly two non-planar isotropic separable
minimal surfaces up to homothety and rigid motion, that they are
conjugate, and that they are Schwarz's diamond surface $\DD$ and
Schwarz's primitive surface $\PP$. In lattice-normalized coordinates
their implicit equations are
\[
  \DD:\quad \mathcal D(x)\,\mathcal D(y)\,\mathcal D(z)=1,
  \qquad\qquad
  \PP:\quad \mathcal C(x)+\mathcal C(y)+\mathcal C(z)
           =3\,\mathcal C(x)\mathcal C(y)\mathcal C(z),
\]
where
$\mathcal D(t)=\frac{\sn\dn}{\cn}\bigl(K[\tfrac14]t,\tfrac14\bigr)$ and
$\mathcal C(t)=\cn\bigl(2K[\tfrac34]t,\tfrac34\bigr)$. The first
equation is classical --- it is due to Schwarz, later Cayley, and appears in
Nitsche's \emph{Lectures} --- we identify Nitsche's transcendental
function exactly as $\sn\dn/\cn$ at parameter $\tfrac14$. The second
appears to be new; it is the symmetric member of the two-parameter
family of Kim \& Ogata (2024), and it proves their assertion that the 
family contains $\PP$. Both surfaces are governed by one differential
equation with two signs,
\[
  \varphi'^{\,2}=1+2\cosh 2\varphi\ \ (\DD),
  \qquad
  \varphi'^{\,2}=1+2\cos 2\varphi\ \ (\PP),
\]
the two signs of the separation constant of the classical reduction;
and in that common normalization the two lattice periods are
$2K[\tfrac14]$ and $2K[\tfrac34]$, whose ratio is Schwarz's $1866$
constant $K'[\tfrac14]/K[\tfrac14]=1.2792615\ldots$, the necessary
scale ratio for a conjugate pair. Proofs will appear in
\cite{Dpaper,Ppaper}.
\end{abstract}
\maketitle

\section{The two equations}

Elliptic functions are in the \emph{parameter} convention $m=k^{2}$, as
in \cite{BF} and in Mathematica's \texttt{EllipticK},
\texttt{JacobiCN}, \texttt{JacobiAmplitude}. A surface
$\Sigma\subset\RR^{3}$ is \emph{separable} \cite{Wein,HL,KO} if
\[
  \Sigma=\{(x,y,z): f(x)+g(y)+h(z)=0\},\qquad
  \nabla(f+g+h)\neq0 \ \text{on}\ \Sigma,
\]
and \emph{isotropic} if $f$, $g$, $h$ agree up to additive constants.
Define
\begin{equation}\label{eq:DC}
  \mathcal D(t):=\frac{\sn\dn}{\cn}\Bigl(K[\tfrac14]\,t,\ \tfrac14\Bigr),
  \qquad
  \mathcal C(t):=\cn\Bigl(2K[\tfrac34]\,t,\ \tfrac34\Bigr),
\end{equation}
so that both have period $2$ and
\begin{equation}\label{eq:DCvalues}
\mathcal D(0)=0,\ \mathcal D(\tfrac12)=1,\
\mathcal D(1)=\infty,\ \mathcal D(t+1)=-\frac1{\mathcal D(t)};
\ \mathcal C(0)=1,\ \mathcal C(\tfrac12)=0,\
\mathcal C(1)=-1,\ \mathcal C(t+1)=-\mathcal C(t).
\end{equation}

Both are even about their natural centers; $\mathcal C$ is even,
bounded by $1$, and real-analytic on $\RR$, whereas $\mathcal D$ is odd
and has poles at the odd integers.

\begin{theorem}[the diamond surface; classical]\label{thm:D}
Let $\DD$ be Schwarz's diamond surface, normalized so that its
translation lattice is the face-centered cubic lattice generated by
$(1,1,0)$, $(1,0,1)$, $(0,1,1)$. Then, up to a lattice translation,
\begin{equation}\label{eq:Deq}
  \boxed{\ \mathcal D(x)\,\mathcal D(y)\,\mathcal D(z)=1\ }
\end{equation}
on the fundamental region $x,y,z\in(0,1)$; globally the fcc‑invariant form 
is $\Phi(x)\Phi(y)+1=\Phi(z)(\Phi(x)+\Phi(y))$ with $\Phi=|\mathcal F|$
and $\mathcal F$ defined by \eqref{eq:Dquad}.
\end{theorem}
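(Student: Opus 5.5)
Starting from the separable ansatz $F(x)+F(y)+F(z)=0$, I would impose the minimal-surface equation for an implicit surface,
\[
|\nabla\Phi|^2\,\Delta\Phi-\nabla\Phi^{T}\,(\nabla^2\Phi)\,\nabla\Phi=0 .
\]
For a separable $\Phi$ this becomes
\[
\sum_{i\neq j} f_i'^{\,2}\,f_j''=0 \quad\text{on }\Sigma .
\]
In the isotropic case I would write the surface as $\varphi(x)+\varphi(y)+\varphi(z)=0$ with $\varphi=\log|\mathcal D|$, so that the multiplicative equation \eqref{eq:Deq} becomes additive. Setting $X=\varphi$ and $\varphi'^{\,2}=P(\varphi)$, so that $\varphi''=\tfrac12P'(\varphi)$, the condition reads
\[
\sum_{i\ne j}P(X_i)\,P'(X_j)=0 \quad\text{whenever } X_1+X_2+X_3=0 .
\]
The classical reduction (Weingarten / Hadamard--Lagrange) says this holds when $P$ is a combination of $1$ and $\cosh 2X$. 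I would verify directly that $P=1+2\cosh2X$ works; this is a routine hyperbolic-identity check using $\sinh$ sum formulas on $X_3=-X_1-X_2$.

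Next I integrate the ODE $\varphi'^{\,2}=1+2\cosh2\varphi$. With $u=e^{\varphi}$ it becomes
\[
u'^{\,2}=u^{2}(1+u^{2}+u^{-2})=u^4+u^2+1 .
\]
I claim $u=\lambda\,\mathcal D(t)$ solves this for a suitable scale. Using $\sn'=\cn\dn$ and the standard derivatives, one computes
\[
\Bigl(\frac{\sn\dn}{\cn}\Bigr)'=\frac{\dn^2\cn^2-m\,\sn^2\cn^2+\sn^2\dn^2}{\cn^2}.
\]
Squaring and expressing everything in $w=\sn\dn/\cn$ should give a quartic
\[
w'^{\,2}=w^4+(2-4m)\,w^2+1 ,
\]
and $m=\tfrac14$ gives exactly $w^4+w^2+1$. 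This is the identification with Nitsche's transcendental function, and I would confirm it by the Landen-type identity $\sn\dn/\cn$ being a rescaled $\mathrm{sc}$-type function at the Landen-transformed modulus. Rescaling $t\mapsto K[\tfrac14]t$ then makes the periods $2$ and produces the values \eqref{eq:DCvalues}. Hence the zero set of $\sum\log\mathcal D(x_i)$ on $(0,1)^3$ is minimal, which is \eqref{eq:Deq}.

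Finally I would identify this piece with Schwarz's $\DD$. The surface contains the straight lines where one factor equals $\infty$ and another equals $0$, e.g. $x=0,\ y=1$. It also has the symmetries $\mathcal D(t+1)=-1/\mathcal D(t)$, which induce half-turns and translations by fcc vectors like $(1,1,0)$. The skew quadrilateral boundary of Schwarz's fundamental patch (edges of the tetrahedron on $(0,1)^3$) lies on the surface. By uniqueness of the Plateau solution in that tetrahedron (Radó, the boundary being a graph over a convex plane projection), the two surfaces agree, and Schwarz reflection extends this globally. For the fcc-invariant form I would rewrite the equation via $\mathcal D(z)=1/(\mathcal D(x)\mathcal D(y))$. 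I would then take the combination $\mathcal F$ from \eqref{eq:Dquad}, which is presumably a Möbius image of $\mathcal D$ that is even under the half-period shift, so that the relation becomes symmetric bilinear and invariant under $t\mapsto t+1$ sign changes. Taking absolute values then handles the octants where the signs change.

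The main obstacle is this last identification step. One must check that the implicit surface is smooth, with $\nabla\ne0$ including near the lines through poles, where the log form must be replaced by \eqref{eq:Deq} cleared of denominators. One must also check that it is exactly the Plateau patch, not merely some minimal surface containing the same lines, and that the boundary contour is in the right configuration for uniqueness.
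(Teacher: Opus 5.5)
Your analytic half is sound, and more self-contained than the paper's argument. For Theorem~\ref{thm:D} the paper itself proves only two things: the identification $\mathcal E(u)=\frac{\sn\dn}{\cn}(u,\tfrac14)$ (both sides solve $y'^2=1+y^2+y^4$ with $y(0)=0$, $y'(0)=1$, by Lemma~\ref{lem:quartic}), and $p_0=K[\tfrac14]$. The equation itself and its identification with $\DD$ are taken from Nitsche's equation (38) and his Bj\"orling argument \cite[\S\S83--85]{Nitsche}; the global form is deferred to \cite{Dpaper}.

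Your $\cosh$ check is correct: within $\alpha+\beta\cosh2X$, only $\beta=2\alpha$ works. It shows that $\{\mathcal D(x)\mathcal D(y)\mathcal D(z)=1\}\cap(0,1)^3$ is a regular minimal surface, and a graph in the $(1,1,1)$ direction since each factor increases. Note that the scale must sit in the argument, $e^{\varphi}=\mathcal D(t/K[\tfrac14])$, not in a factor $\lambda$.

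Your identification step, however, uses the wrong contour. The edges of the tetrahedron inscribed in $[0,1]^3$ are face diagonals such as $\{(t,t,0)\}$, where $\prod\mathcal D=0$. What the patch actually spans is the skew hexagon of the six cube edges avoiding $(0,0,0)$ and $(1,1,1)$. Near $\{(0,1,z)\}$, rewrite the equation as $\mathcal D(x)\mathcal D(z)=\mathcal D(1-y)$, which is regular there. The patch also contains the three lines $(t,1-t,\tfrac12)$, etc., because $\mathcal D(t)\mathcal D(1-t)=1$. Rad\'o does give uniqueness for that hexagon, which projects bijectively onto a regular hexagon. But you still need the classical input that Schwarz's $\DD$ contains a disk spanning this hexagon at this scale, which is exactly what the paper borrows from Nitsche.

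The step that actually fails is the symmetry claim. Shifting two coordinates by $1$ does not preserve the equation. Since $\mathcal D(x+1)\mathcal D(y+1)=1/(\mathcal D(x)\mathcal D(y))$, the translate by $(1,1,0)$ is $\{\mathcal D(z)=\mathcal D(x)\mathcal D(y)\}$, a different set. On the line $x=\tfrac12$, $y=\tfrac14$, the two sets meet $z\in\tfrac34+2\ZZ$ and $z\in\tfrac14+2\ZZ$ respectively.

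Worse, this is incompatible with your own reflection step. The half-turn about the edge $\{(1,0,z)\}$ continues the patch as $y\approx-(x-1)/\mathcal D(z)$. The $(1,-1,0)$-translate of the patch reaches the same edge, in the same quadrant, as $y\approx-\mathcal D(z)(x-1)$. These tangent planes differ for $z\neq\tfrac12$. Hence no embedded minimal surface can contain the patch and be periodic under the lattice generated by $(1,1,0),(1,0,1),(0,1,1)$. The reflected surface is the full analytic set $\{\prod\mathcal D=1\}$. It is periodic under $2\ZZ^3$, with side-preserving lattice generated by $(2,2,0),(2,0,2),(0,2,2)$, but it is not periodic under $(1,1,0)$.

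For the same reason, ``taking absolute values'' cannot produce a single global equation with $z$ distinguished. In a cell where the sign of $\mathcal F_1$ differs from those of $\mathcal F_2,\mathcal F_3$, the quadratic twin reads $\Phi_2\Phi_3+1=\Phi_1(\Phi_2+\Phi_3)$.

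So your argument can deliver the product form on $(0,1)^3$ and its global continuation. It cannot deliver the fcc normalization or the $\Phi$-form as stated, and these appear to require the lattice to be rescaled by a factor of $2$.
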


Equation \eqref{eq:Deq} is \emph{not} new: it is equation \textup{(38)}
of \textup{\cite[\S83]{Nitsche}}, where the surface is attributed to
Schwarz \textup{\cite[vol.~I, pp.~1--125]{Schwarz}} and the
classification of the surrounding class to Cayley \textup{\cite{Cayley}};
see \S\ref{sec:credit}. What is new in Theorem \ref{thm:D} is only 
the closed form of the transcendental function: Nitsche defines his 
$\mathcal E$ implicitly, as
the inverse of $\xi=\int_{0}^{t}(1+\tau^{2}+\tau^{4})^{-1/2}\dd\tau$,
and we observe that
\begin{equation}\label{eq:NitscheE}
\mathcal E(u)=\frac{\sn\dn}{\cn}\Bigl(u,\tfrac14\Bigr).
\end{equation}
Both sides of \eqref{eq:NitscheE}, as functions of $u$, solve
$y'^{2}=1+y^{2}+y^{4}$ with $y(0)=0$, $y'(0)=1$: for the left side this
is the definition of $\mathcal E$, and for the right side it is the
identity
$\bigl(\tfrac{\sn\dn}{\cn}\bigr)'^{2}
=1+(2-4m)\bigl(\tfrac{\sn\dn}{\cn}\bigr)^{2}
+\bigl(\tfrac{\sn\dn}{\cn}\bigr)^{4}$ at $m=\tfrac14$.
Consistently, the first pole of $\mathcal E$ is the first zero of
$\cn(\cdot,\tfrac14)$: Nitsche's period constant
$p_{0}=\int_{0}^{\infty}(1+\tau^{2}+\tau^{4})^{-1/2}\dd\tau$, which he
evaluates as $\tfrac23K[\tfrac89]$ \textup{\cite[\S84]{Nitsche}}, is
$K[\tfrac14]$ by one descending Landen step. This is why the argument
of $\mathcal D$ in \eqref{eq:DC} is $K[\tfrac14]\,t$, and why the cube
edge in Table \ref{tab:par} is $2K[\tfrac14]=2p_{0}$.

\begin{theorem}[the primitive surface]\label{thm:P}
Let $\PP$ be Schwarz's primitive surface, normalized so that its
translation lattice is the body-centered cubic lattice
$\Lat$ generated by $2\ZZ^{3}$ and $(1,1,1)$, and so that the origin is
a center of one of the two labyrinths. Put
\begin{equation}\label{eq:Pfun}
  \mathcal P(x,y,z):=\mathcal C(x)+\mathcal C(y)+\mathcal C(z)
   -3\,\mathcal C(x)\,\mathcal C(y)\,\mathcal C(z).
\end{equation}
Then
\begin{equation}\label{eq:Peq}
  \boxed{\ \{\mathcal P=0\}=\PP\ \sqcup\ \Lat,\ }
\end{equation}
where the exceptional set is the lattice $\Lat$ itself,
$\Lat=\{n\in\ZZ^{3}:n_{1}\equiv n_{2}\equiv n_{3}\ (\mathrm{mod}\ 2)\}$,
a discrete set disjoint from the topological closure $\overline{\PP}$ of $\PP$. Thus \eqref{eq:Peq} determines $\PP$.
\end{theorem}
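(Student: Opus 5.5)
Write $C_i=\mathcal C(x_i)$. The plan has three stages. First, show that $\{\mathcal P=0\}$ is minimal wherever $\nabla\mathcal P\neq0$. Second, locate the points where $\nabla\mathcal P=0$ on the zero set and show that they are exactly $\Lat$. Third, identify the regular part with Schwarz's $\PP$ using symmetry and uniqueness.

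\emph{Step 1: minimality via the separable reduction.} $\{\mathcal P=0\}$ is not written in additive form, but off the planes $C_i=0$ we can divide. Set $\varphi_i=\operatorname{arccosh}$ or $\operatorname{arcsinh}$ of a suitable function of $C_i$, chosen so that $\mathcal P=0$ becomes $\tan\varphi_1+\tan\varphi_2+\tan\varphi_3=\tan\varphi_1\tan\varphi_2\tan\varphi_3$. The natural candidate is $\tan\varphi_i=\sqrt3\,C_i$, or a variant giving exactly the cubic $C_1+C_2+C_3=3C_1C_2C_3$. With that choice the equation reads $\varphi_1+\varphi_2+\varphi_3\equiv0 \pmod\pi$, which is additively separable.

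Next, verify that each $\varphi_i(x_i)$ satisfies $\varphi'^{2}=c\,(1+2\cos2\varphi)$ for a common constant $c$. This uses the cn identity $\mathcal C'^2=(2K[\tfrac34])^2(1-\mathcal C^2)(\tfrac14+\tfrac34\mathcal C^2)$ at $m=\tfrac34$, and should be a short computation. Then apply the classical reduction: for $F=\varphi_1(x)+\varphi_2(y)+\varphi_3(z)$, the mean curvature equation $|\nabla F|^2\Delta F-\tfrac12\nabla F\cdot\nabla|\nabla F|^2=0$ becomes, on $F=\text{const}$,
\[
\textstyle\sum_i \varphi_i''\sum_{j\neq i}\varphi_j'^2=0 .
\]
With $\varphi_i'^2=c(1+2\cos2\varphi_i)$ we get $\varphi_i''=-2c\sin2\varphi_i$. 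The identity then reduces to a trigonometric identity in $\varphi_1,\varphi_2,\varphi_3$ that holds whenever $\varphi_1+\varphi_2+\varphi_3\in\pi\ZZ$. I would check this identity symbolically.

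Minimality then extends across the planes $C_i=0$ by real-analyticity. Alternatively, one can verify the polynomial identity $|\nabla\mathcal P|^2\Delta\mathcal P-\tfrac12\nabla\mathcal P\cdot\nabla|\nabla\mathcal P|^2\in(\mathcal P)$ directly in $C_i$ and $C_i'^2$, which avoids division entirely. I would do this direct verification as the rigorous version.

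\emph{Step 2: the singular set.} We have $\partial_i\mathcal P=\mathcal C'(x_i)\,(1-3C_jC_k)$. On the zero set we solve $\nabla\mathcal P=0$ case by case.

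If all $\mathcal C'(x_i)=0$, then every $C_i=\pm1$. Checking $\mathcal P=0$ forces $C_i$ all equal, which is precisely the bcc lattice $\Lat$: indeed $C=1$ at even integers and $C=-1$ at odd integers.

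Otherwise some $C_jC_k=\tfrac13$. Combining this with $\mathcal P=0$ and $|C|\le1$ should yield no solutions. This is the main obstacle: it requires a finite but fiddly case analysis of small polynomial systems, for example $C_1C_2=C_1C_3=\tfrac13$. Disjointness of $\Lat$ from $\overline{\PP}$ follows because near a lattice point $\mathcal P$ has a nondegenerate Hessian. Expanding $C_i\approx\pm(1-\alpha x_i^2)$ gives a quadratic form whose sign must be definite, making the lattice points isolated.

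\emph{Step 3: identification.} The regular zero set is invariant under $\Lat$, under coordinate permutations, and under the reflections $x_i\mapsto-x_i$. It contains the straight lines or planar symmetry curves of $\PP$; for instance, $C_3=0$ with $C_1=-C_2$ gives lines on the planes $z=\tfrac12$. It is embedded and divides space into two labyrinths, one containing the origin. The surface therefore contains the Schwarz quadrilateral or hexagonal boundary data of $\PP$ within a fundamental cell. By uniqueness of the Plateau solution in the fundamental tetrahedron (Schwarz's construction) together with analytic continuation, it equals $\PP$. The scale is consistent with the lattice $\Lat$ as normalized.
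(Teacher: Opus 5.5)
\textbf{Steps 1 and 2.} These are sound and are essentially the paper's own argument (Proposition~\ref{prop:sep}(b) and step~(3) of \S\ref{sec:method}).

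The substitution is just $\varphi_i=\arctan\bigl(\sqrt3\,\mathcal C(x_i)\bigr)$; the arccosh/arcsinh belong to the $\DD$ case. This $\varphi_i$ is globally real-analytic, and $\sin\sum\varphi_i=\prod\cos\varphi_i\cdot\sqrt3\,\mathcal P$ with each $\cos\varphi_i\ge\frac12$. So nothing needs extending across $\mathcal C=0$. Your trigonometric identity holds because $\sin(2\varphi_i+2\varphi_j)=-\sin2\varphi_k$ when $\sum\varphi\in\pi\ZZ$.

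What you call the main obstacle of Step~2 is one line. If $1-3C_jC_k=0$, then $\mathcal P=C_j+C_k$, which is nonzero because $C_jC_k=\frac13>0$. Hence on $\{\mathcal P=0\}$ the gradient vanishes only on $\Lat$. Your Hessian expansion, $\mathcal P=\pm4K[\frac34]^2|x-n|^2+O(|x-n|^4)$, makes those points isolated.

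\textbf{The gap is Step 3.} Knowing that $\{\mathcal P=0\}$ contains a Schwarz contour (e.g.\ the four segments of Remark~\ref{rem:hand}) does not show that its part inside the tetrahedron $T$ spanned by the contour is a disc with exactly that boundary. Without that, uniqueness of the Plateau solution gives nothing. Also, analytic continuation only yields $\PP\subset\{\mathcal P=0\}$, so you must also prove the regular part of the zero set is connected, to exclude extra components. The ``hexagonal'' variant is a free-boundary problem with no ready uniqueness theorem.

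Both facts follow from the monotonicity of $\Psi$. Work in the coordinates of Remark~\ref{rem:hand}, with $Q=\Psi(x)-\Psi(y)-\Psi(z)$ and $d=(1,0,-1)$. Then:
\begin{itemize}
\item $\partial_dQ=\Psi'(x)+\Psi'(z)<0$ on $T$;
\item $Q\le0$ on the faces of $T$ lying in $z=\frac12$ and $x=\frac12$;
\item $Q\ge0$ on the other two faces, by concavity of $\Psi$ on $[0,\frac12]$ (from $\Psi''=-2K[\frac34]^2\sin2\Psi$);
\item equality holds only on the contour.
\end{itemize}
So $\{Q=0\}\cap T$ is a graph in direction $d$ over the rhombus onto which the contour projects bijectively. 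Rad\'o's theorem plus the maximum principle then identify it with Schwarz's patch. Connectedness follows because in each slab $n\le x\le n+1$ the surface is a graph $x=x(y,z)$ over a connected region, and consecutive slabs share their trace on $x=n+1$. None of this is in your proposal.

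\textbf{How the paper does it.} The paper identifies the surface differently. Along the single segment $\{(t,-t,\frac12)\}$ it compares the normal of $\{Q=0\}$ with that of the Weierstrass patch (Gauss map $\zeta$, height differential $2\zeta\dd\zeta/\sqrt{1+14\zeta^{4}+\zeta^{8}}$) and invokes Bj\"orling's uniqueness. That is a local comparison of normals which needs the explicit Weierstrass data and an elliptic identity, but no disc or graph argument. Your Plateau route would avoid the Weierstrass data, but only once the graph argument above is supplied.
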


\begin{remark}[the exceptional set]\label{rem:exc}
$\Lat$ is unavoidable and completely understood. By
Proposition \ref{prop:sep} below,
$\mathcal P$ vanishes exactly where
$\Psi(x)+\Psi(y)+\Psi(z)\in\pi\ZZ$, with $\Psi$ the bounded function
\eqref{eq:Psi} below; the surface is the case $0$, and
the cases $\pm\pi$ force the equality
$\Psi(x)=\Psi(y)=\Psi(z)=\pm\frac\pi3$, i.e., $x,y,z\in\ZZ$ of equal
parity. These are the turning points of the pendulum
\eqref{eq:odeP} --- geometrically, the centers of the two labyrinths,
one lattice orbit each, of which \eqref{eq:Pfun} sees one. Replacing
$\mathcal C$ by $\cn\dn$ and $\sn$ data at parameter $-3$ removes
$\Lat$ at the cost of the trilinear shape.
\end{remark}

\begin{remark}[symmetry at sight]\label{rem:sym}
Equation \eqref{eq:Pfun} is invariant under all $6$ permutations of
$(x,y,z)$, under all $8$ sign changes $x\mapsto-x$ (because
$\mathcal C$ is even), under $2\ZZ^{3}$ (period $2$), and under
$(x,y,z)\mapsto(x+1,y+1,z+1)$ (because both sides of \eqref{eq:Peq}
are odd under $\mathcal C\mapsto-\mathcal C$). This is the full space
group $Im\bar3m$ of $\PP$, read off without computation, with no
absolute values anywhere: the equation is sign-honest, and
$\operatorname{sign}\mathcal P$ distinguishes the two labyrinths. By
contrast, the real fcc invariance of \eqref{eq:Deq} needs two
simultaneous shifts, since $\mathcal D(t+1)=-1/\mathcal D(t)$ carries a
sign; over $\RR$ this is the source of the absolute values in
\cite{Dpaper}.
\end{remark}

\begin{remark}[hand checks for $\PP$]\label{rem:hand}
In the coordinates of \cite{Ppaper} --- base point $(0,0,\frac12)$,
flat points $(\pm\frac12,\pm\frac12,\frac12)$ --- \eqref{eq:Peq} reads
$\mathcal C(x)-\mathcal C(y)-\mathcal C(z)
 =3\,\mathcal C(x)\mathcal C(y)\mathcal C(z)$,
the two forms being exchanged by $(x,y,z)\mapsto(x,1-y,1-z)$ together
with \eqref{eq:DCvalues}. The four segments joining
$(0,0,\frac12)\to(\frac12,-\frac12,\frac12)\to(\frac12,0,1)
\to(\frac12,\frac12,\frac12)\to(0,0,\frac12)$, four contiguous edges of
a regular octahedron spanning a skew rhombus, then satisfy the equation
identically \emph{by parity and antiperiodicity alone}, with no
evaluation of an elliptic function: on $\{(t,\pm t,\frac12)\}$ it is
$\mathcal C(t)\mp\mathcal C(t)-0=0$, and on $\{(\frac12,t,1-t)\}$ and
$\{(\frac12,t-\frac12,t+\frac12)\}$ it is
$0-\mathcal C(t)-\mathcal C(1-t)=0$. The analogous statement for $\DD$
is that its patch is spanned by four edges of a regular tetrahedron
\textup{\cite[\S84, fig.~7]{Nitsche}}. Equivalently, by \eqref{eq:Psi} the relation is
$\Psi(x)=\Psi(y)+\Psi(z)$ in those coordinates, and the four segments
are $\Psi(t)=\Psi(\pm t)+\Psi(\tfrac12)$ and
$\Psi(\tfrac12)=\Psi(t)+\Psi(1-t)$, both immediate from
\eqref{eq:Psivalues}.
\end{remark}

\section{One equation, two signs}\label{sec:one}

The parallel between \eqref{eq:Deq} and \eqref{eq:Peq} is not an
analogy but an identity of mechanism. Following Weingarten
\cite{Wein} and Nitsche \cite[\S82]{Nitsche}, write a separable minimal
surface as $u+v+w=0$ with $u=f(x)$, etc., set $X(u)=f'(x)^{2}$, and
recall that minimality is equivalent to
\begin{equation}\label{eq:mse}
  (Y+Z)X'+(Z+X)Y'+(X+Y)Z'=0 ,
\end{equation}
which forces $X'''/X'=Y'''/Y'=Z'''/Z'=\kappa$ for a constant $\kappa$,
the \emph{separation constant}.

\begin{theorem}[the dichotomy, and uniqueness]\label{thm:unique}
Up to homothety and rigid motion there are exactly two non-planar
isotropic separable minimal surfaces in $\RR^{3}$. In the normalization
$|\kappa|=4$ with unit coefficients, their common separating function
$\varphi$ satisfies
\begin{align}
  \kappa=+4:\qquad &\varphi'(t)^{2}=1+2\cosh\bigl(2\varphi(t)\bigr),
   \label{eq:odeD}\\
  \kappa=-4:\qquad &\varphi'(t)^{2}=1+2\cos\bigl(2\varphi(t)\bigr),
   \label{eq:odeP}
\end{align}
and the surfaces $\varphi(x)+\varphi(y)+\varphi(z)=0$ are
\begin{subequations}\label{eq:phis}
	\begin{align}
	\DD:\quad \varphi(t)&=\log\frac{\sn\dn}{\cn}\Bigl(t,\tfrac14\Bigr),
	\label{eq:phiD}\\
	\PP:\quad \varphi(t)&=\am\Bigl(\sqrt3\,t+a,\ \tfrac43\Bigr)
	=\arctan\Bigl(\sqrt3\,\cn\bigl(2t,\tfrac34\bigr)\Bigr),
	\ a:=F[\pi/3\,|\,4/3]=\tfrac{\sqrt3}{2}K[\tfrac34].
	\label{eq:phiP}
	\end{align}
\end{subequations}
There is no isotropic example with $\kappa=0$. The two surfaces are
conjugate: $\PP$ is the adjoint of $\DD$.
\end{theorem}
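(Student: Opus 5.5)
Start from the Weingarten--Nitsche reduction \eqref{eq:mse}. In the isotropic case $X=Y=Z$ as functions of their own variables, so $u,v,w$ all share one function $X$. Since $X'''=\kappa X'$, we get $X'=\alpha e^{\sqrt\kappa u}+\beta e^{-\sqrt\kappa u}$ for $\kappa>0$, trigonometric analogues for $\kappa<0$, and a quadratic for $\kappa=0$. Hence $X$ is a combination of $1$ and $\cosh$, $\cos$, or powers of $u$ respectively.

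Next substitute back into \eqref{eq:mse} on the constraint $u+v+w=0$. This is not automatic: \eqref{eq:mse} must hold identically on the plane $w=-u-v$, and it gives polynomial or exponential identities in $u,v$. I would write $X=c_0+c_1e^{\lambda u}+c_2e^{-\lambda u}$ and expand. Matching the coefficients of the independent exponentials $e^{\lambda(\pm u\pm v)}$ and similar terms should force $c_1=c_2$ and $c_0=c_1/2\cdot(\pm1)$, up to the sign and normalisation fixed by the statement. After scaling $u$ so that $|\kappa|=4$ and scaling $X$, this gives $X=1+2\cosh 2u$. The case $\kappa<0$ gives $X=1+2\cos 2u$ in the same way, up to a phase in $u$ that can be removed by an additive constant. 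That is allowed by isotropy, since the three additive shifts must sum to zero. For $\kappa=0$, $X$ is a quadratic in $u$, and the same matching should force $X$ to be constant. A constant $X$ makes $f$ linear, so the surface is a plane, which proves the nonexistence claim.

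Since $\varphi'(t)^2=X(\varphi)$ with $\varphi=f$, this yields \eqref{eq:odeD} and \eqref{eq:odeP}. Solving them is the verification the paper already did. For \eqref{eq:odeD}, put $y=e^{\varphi}$. Then $y'^2=y^2(1+y^2+y^{-2})=1+y^2+y^4$, which is Nitsche's equation, so $y=\sn\dn/\cn(t,\tfrac14)$ by \eqref{eq:NitscheE}. For \eqref{eq:odeP}, $\varphi'^2=3-4\sin^2\varphi$, i.e.\ $\varphi'^2=3(1-\tfrac43\sin^2\varphi)$, which is solved by the amplitude $\am(\sqrt3t+a,\tfrac43)$. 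The reciprocal-modulus transformation to parameter $\tfrac34$, together with the phase $a$ chosen so that $\varphi(0)=\pi/3$, gives the arctan form. I would check that last form directly by differentiating $\arctan(\sqrt3\cn(2t,\tfrac34))$ and using $\sn^2+\cn^2=1$ and $\dn^2=1-\tfrac34\sn^2$. Identifying the resulting surfaces with $\DD$ and $\PP$ then follows from Theorems~\ref{thm:D} and~\ref{thm:P}. One only needs to check that $\exp$ of $\varphi(x)+\varphi(y)+\varphi(z)=0$ is \eqref{eq:Deq} after rescaling by $K[\tfrac14]$. It also needs $\tan$ of the $\PP$ relation to reproduce \eqref{eq:Peq}, via the tangent addition formula for three angles, $\tan(\alpha+\beta+\gamma)$ having numerator $\sum\tan-\prod\tan$.

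Finally, conjugacy is classical for $\DD$ and $\PP$ (Schwarz/Bonnet), so uniqueness reduces it to citing that fact. I would also note that the formal substitution $\varphi\mapsto i\varphi$ exchanges \eqref{eq:odeD} and \eqref{eq:odeP} up to the sign of $t^2$, as a heuristic consistency check.

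The main obstacle is the coefficient matching in the second step. One must show rigorously that \eqref{eq:mse} on the plane forces exactly these ratios and nothing degenerate. The degenerate candidates to exclude are a pure exponential $X=e^{2u}$, which gives planes or non-real surfaces, and cases where $\nabla\neq0$ fails. I would handle this by differentiating in $v$ at fixed $u+v+w=0$ to reduce everything to a finite linear system.
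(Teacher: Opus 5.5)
Your route is the paper's route in substance. The paper also takes $X'''=\kappa X'$ from the Weingarten--Nitsche reduction and imposes isotropy on the resulting coefficient relations, but it cites those relations rather than deriving them: Nitsche's (33) for $\kappa>0$, which under isotropy give $ab=c^{2}$, $ac=b^{2}$, hence $a=b=c$; and Kim--Ogata's Lemma~4.1 for $\kappa<0$, giving $a_i=A$, $|b_i|=A$. The step you defer is the whole content of the uniqueness claim, and your predictions of its outcome are partly wrong. After shifts of $u,v,w$ summing to zero you may take $X=Y=Z=c_0+c_1U+c_2U^{-1}$ with $U=e^{\lambda u}$. The mixed terms $U_iU_j^{-1}$ cancel identically, and on $U_1U_2U_3=1$ equation \eqref{eq:mse} becomes, up to a constant factor,
\[
\sum_{k=1}^{3}\Bigl[(c_0c_1-c_2^{2})\,U_k+(c_1^{2}-c_0c_2)\,U_k^{-1}\Bigr]=0 .
\]
Independence of the six monomials then gives $c_0c_1=c_2^{2}$ and $c_0c_2=c_1^{2}$. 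For $\lambda=2$ this yields, apart from constant $X$, $c_0=c_1=c_2>0$. It does not give ``$c_0=\pm c_1/2$'', which contradicts your own $X=1+2\cosh 2u=1+e^{2u}+e^{-2u}$.

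The more serious gap is that coefficient matching does not finish the other two cases. What finishes them is the hypothesis $\nabla(f+g+h)\neq0$, which your argument never invokes.

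For $\kappa=0$ the matching does \emph{not} force $X$ constant. With $X=\alpha u^{2}+\beta u+\gamma$ the residual on $u_1+u_2+u_3=0$ is $6\beta\gamma-6\alpha^{2}u_1u_2u_3$, so $X(u)=\beta u$ survives. Then $\varphi'^{2}=\beta\varphi$ gives $\varphi$ the sign of $\beta$, so $\sum\varphi(x_i)=0$ forces every $\varphi(x_i)=0$ and hence every $\varphi'(x_i)=0$. The zero set is a single point, $(t_0,t_0,t_0)$ for $\varphi=\frac\beta4(t-t_0)^2$, at which the gradient vanishes.

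For $\kappa<0$ the phase cannot be removed for free. Once $X=Y=Z$ at level $0$, a further common shift of $\varphi$ by $\sigma$ moves the level by $3\sigma$. Instead, with $c_2=\bar c_1=re^{-i\theta}$, the relations give $c_0=re^{-3i\theta}\in\RR$. Hence there are six solutions $X=r\bigl(\cos3\theta+2\cos(2u+\theta)\bigr)$ with $\theta\in\frac\pi3\ZZ$, and only $\theta=0$ gives $\PP$:
\begin{itemize}
\item for $\theta=\pm\frac{2\pi}{3}$ the relation becomes $\sum\chi=\pm\pi$, with $\chi=\varphi\pm\frac\pi3$ a rescaled solution of \eqref{eq:odeP}; these are the isolated points of Remark~\ref{rem:exc};
\item for $\theta=\pm\frac\pi3$ the zero set consists of isolated turning points where $\varphi'=0$;
\item for $\theta=\pi$ the zero set is empty.
\end{itemize}

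With these exclusions supplied, the rest of your proposal is fine and matches the paper: the substitution $y=e^{\varphi}$, the $\am$ and $\arctan$ verifications, the identification via Theorems~\ref{thm:D} and~\ref{thm:P}, and citing the classical conjugacy.
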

\noindent Here \eqref{eq:phiP} is in the ODE normalization of
Theorem \ref{thm:unique}, of antiperiod $K[\tfrac34]$; the
lattice-normalized function of \eqref{eq:DC} is
$\Psi(t):=\arctan\bigl(\sqrt3\,\mathcal C(t)\bigr)$, so that
$\varphi_{\PP}(t)=\Psi\bigl(t/K[\tfrac34]\bigr)$.
Three features of Theorem \ref{thm:unique} deserve emphasis.

\subsection*{(i) The parameters are forced, not chosen}
In the $\kappa>0$ branch, isotropy applied to Nitsche's coefficient
relations \cite[(33)]{Nitsche} gives $ab=c^{2}$ and $ac=b^{2}$, hence
$a=b=c$; normalizing to $1$ yields $X(u)=1+e^{2u}+e^{-2u}$, which is
\eqref{eq:odeD}, and the parameter $\tfrac14$ in \eqref{eq:phis}
follows. In the $\kappa<0$ branch, isotropy applied to
\cite[Lem.~4.1]{KO} gives $a_{i}=A$ and $|b_{i}|=A$ for a single
$A>0$, so the amplitude parameter is
$4\sqrt{|BC|}/E_{1}=4A/3A=\tfrac43$ \emph{for every $A$}: the
parameter $\tfrac43$ is forced and only the scale is free. This is
what identifies the member of the Kim--Ogata family, and proves
\cite[Thm.~1(3)]{KO}.

\subsection*{(ii) Bounded versus unbounded, and why $\PP$ is the tidier surface}
The right side of \eqref{eq:odeD} is $\ge3>0$, so $\varphi$ is a
diffeomorphism onto $\RR$: the associated function
$e^{\varphi}=\sn\dn/\cn$ is unbounded, has poles, and
$\varphi=\log(\cdot)$ is defined only where that function is positive.
The right side of \eqref{eq:odeP} vanishes at
$\varphi=\pm\frac\pi3$, so $\varphi$ \emph{oscillates} between the two
turning points. In the lattice normalization of \eqref{eq:DC} we write
\begin{equation}\label{eq:Psi}
\Psi(t):=\arctan\bigl(\sqrt3\,\mathcal C(t)\bigr)
=\am\bigl(a(1+2t),\ \tfrac43\bigr),
\qquad a=F[\pi/3\,|\,4/3],
\end{equation}
so that $\varphi_{\PP}(t)=\Psi\bigl(t/K[\tfrac34]\bigr)$: the function
$\varphi_{\PP}$ of \eqref{eq:phiP} has antiperiod $K[\tfrac34]$, whereas
$\Psi$ has antiperiod $1$. Then $\Psi$ is real-analytic and even on
$\RR$, with
\begin{equation}\label{eq:Psivalues}
\Psi(0)=\tfrac\pi3,\ \Psi(\tfrac14)=\tfrac\pi4,\
\Psi(\tfrac12)=0,\ \Psi(\tfrac34)=-\tfrac\pi4,\
\Psi(1)=-\tfrac\pi3,\ \Psi(t+1)=-\Psi(t),\ 
|\Psi|\le\tfrac\pi3 ,
\end{equation}
and $\Psi$ decreases strictly from $\frac\pi3$ to $-\frac\pi3$ on
$[0,1]$.
Every awkwardness in $\DD$ --- the absolute values, the projective
degenerations at poles, the need to argue in transformed variables ---
and every convenience in $\PP$ --- global separability, sign-honesty,
the trilinear normal form --- is this one difference. It is also the
reason the Jacobi amplitude in \eqref{eq:phis} must be read in its
\emph{bounded} branch, $\am(u,m)=\arcsin\sn(u,m)$ for $m>1$: at
parameter $\tfrac43>1$ the naive monotone reading is a different
function, and with it \eqref{eq:Peq} is false.

\subsection*{(iii) Schwarz's constant emerges}
In the common normalization of Theorem \ref{thm:unique} the two
solutions of \eqref{eq:odeD}, \eqref{eq:odeP} have $t$-periods
\begin{equation}\label{eq:periods}
  2K[\tfrac14]=3.3715007\ldots\quad(\DD),
  \qquad 2K[\tfrac34]=4.3130313\ldots\quad(\PP),
\end{equation}
so the ratio of the edges of the circumscribed cubes is
\begin{equation}\label{eq:schwarz}
  \frac{\lambda_{\PP}}{\lambda_{\DD}}
  =\frac{K[3/4]}{K[1/4]}=\frac{K'[1/4]}{K[1/4]}
  =1.2792615711710064662\ldots
\end{equation}
This is precisely the constant Schwarz obtained in $1866$ as the
condition for the adjoint hexagonal patches of $\DD$ and $\PP$ to have
equal area, hence for Bonnet bending of either surface into the other
\cite[vol.~I, p.~88]{Schwarz}; it also governs the area-to-volume
comparison of the two cells \cite{GKD}. So the ODE normalization
$|\kappa|=4$ with unit coefficients is \emph{automatically} the
isometric normalization of the conjugate pair --- an external check on
Theorem \ref{thm:unique} that uses no elliptic-function computation at
all.

\section{Equivalent forms, and the shape of the parallel}

\begin{proposition}[from ODE to equation]\label{prop:sep}
Let $\varphi$ be as in \eqref{eq:phis}. Passing to the
lattice-normalized variable of \eqref{eq:DC}:
\begin{enumerate}[label=\textup{(\alph*)},leftmargin=2.3em]
\item $\DD$: $\ \varphi(x)+\varphi(y)+\varphi(z)=0$ exponentiates to
      $\mathcal D(x)\mathcal D(y)\mathcal D(z)=1$;
\item $\PP$: with $\Psi$ as in \eqref{eq:Psi}, so that
$\tan\Psi=\sqrt3\,\mathcal C$, the relation
$\Psi(x)+\Psi(y)+\Psi(z)=0$ becomes, by the tangent addition
theorem, $\sum\mathcal C=3\prod\mathcal C$; quantitatively
      \begin{equation}\label{eq:err}
        \sin\bigl(\Psi(x)+\Psi(y)+\Psi(z)\bigr)
        =\Bigl[\cos\Psi(x)\cos\Psi(y)\cos\Psi(z)\Bigr]\cdot
         \sqrt3\ \mathcal P(x,y,z),
      \end{equation}
      the bracket being real-analytic and $\ge\frac18>0$ on
      $\RR^{3}$. Hence $\mathcal P$ and $\sin\sum\Psi$ agree in sign
      and in zero set, which is Theorem \ref{thm:P} and
      Remark \ref{rem:exc}.
\end{enumerate}
\end{proposition}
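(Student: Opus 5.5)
For (a) we change variables and exponentiate. By \eqref{eq:phiD}, $e^{\varphi(t)}=\frac{\sn\dn}{\cn}(t,\tfrac14)$. Substituting $t=K[\tfrac14]\,s$ turns this into $e^{\varphi(K[1/4]s)}=\mathcal D(s)$ by \eqref{eq:DC}; the substitution is a homothety, so it does not change the surface up to the normalization of Theorem \ref{thm:unique}. Hence $\varphi(x)+\varphi(y)+\varphi(z)=0$ is equivalent to $\mathcal D(x)\mathcal D(y)\mathcal D(z)=1$ wherever all three logarithms are defined, i.e.\ where each $\mathcal D>0$. This is the fundamental region $x,y,z\in(0,1)$ of Theorem \ref{thm:D}. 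Conversely, any solution of the product equation with all factors positive gives a zero of the log-sum. So (a) is immediate once the domain caveat is stated.

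For (b), the work is a trigonometric identity. Write $\alpha=\Psi(x)$, $\beta=\Psi(y)$, $\gamma=\Psi(z)$, and expand $\sin(\alpha+\beta+\gamma)$ by the addition theorem:
\[
\sin(\alpha+\beta+\gamma)=\cos\alpha\cos\beta\cos\gamma\,\bigl(\tan\alpha+\tan\beta+\tan\gamma-\tan\alpha\tan\beta\tan\gamma\bigr).
\]
This is a polynomial identity in $\sin$ and $\cos$, so it holds even where a cosine vanishes, although that never happens here. Inserting $\tan\Psi=\sqrt3\,\mathcal C$ gives $\sqrt3\sum\mathcal C-3\sqrt3\prod\mathcal C=\sqrt3\,\mathcal P$, which is \eqref{eq:err}.

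For the lower bound on the bracket, use $|\Psi|\le\pi/3$ from \eqref{eq:Psivalues}. This gives $\cos\Psi\ge\tfrac12$, so the bracket is at least $\tfrac18$. It is real-analytic because $\Psi=\arctan(\sqrt3\,\mathcal C)$ is a composition of analytic functions, with $\mathcal C=\cn(\cdot,\tfrac34)$ analytic on $\RR$. The identification $\tan\Psi=\sqrt3\,\mathcal C$ and the bound $|\Psi|\le\pi/3$ are part of the definition \eqref{eq:Psi} and of \eqref{eq:Psivalues}, which we take as given.

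It then follows that $\mathcal P$ and $\sin\sum\Psi$ have the same sign and the same zero set. The zeros are $\Sigma:=\Psi(x)+\Psi(y)+\Psi(z)\in\pi\ZZ$, and $|\Sigma|\le\pi$ forces $\Sigma\in\{-\pi,0,\pi\}$.

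\textbf{Case $\Sigma=0$.} By Theorem \ref{thm:unique} and the rescaling $\varphi_{\PP}(t)=\Psi(t/K[\tfrac34])$, this is the surface $\PP$ in lattice normalization.

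\textbf{Case $\Sigma=\pm\pi$.} Equality in $|\Sigma|\le\pi$ requires each $\Psi=\pm\pi/3$. Since $\Psi$ is strictly monotone on $[0,1]$, even, and satisfies $\Psi(t+1)=-\Psi(t)$, we have $\Psi=\pi/3$ exactly on $2\ZZ$ and $\Psi=-\pi/3$ exactly on $1+2\ZZ$. So these zeros are the points $(x,y,z)\in\ZZ^3$ with all coordinates even or all odd, which is $\Lat$.

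Disjointness of $\Lat$ from $\overline{\PP}$ follows because $\Sigma$ is continuous: it equals $0$ on $\overline{\PP}$ and $\pm\pi$ on $\Lat$. This gives Remark \ref{rem:exc} and Theorem \ref{thm:P}.

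The main obstacle is not the algebra but the identification in the case $\Sigma=0$. The zero set of $\Psi(x)+\Psi(y)+\Psi(z)$ must be \emph{exactly} Schwarz's $\PP$, and this requires Theorem \ref{thm:unique}, including the conjugacy statement. I would take that as given, so that (b) itself reduces to the displayed identity and the equality case of the bound $|\Psi|\le\pi/3$.
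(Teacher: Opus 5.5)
Your proposal is correct and follows the paper's route. You use the sine/tangent addition expansion with $\tan\Psi=\sqrt3\,\mathcal C$ and the bound $\cos\Psi\ge\tfrac12$ from $|\Psi|\le\tfrac\pi3$. The equality case $\Psi=\pm\tfrac\pi3$ exactly on even/odd integers gives $\Lat$, and you import the identification $\{\sum\Psi=0\}=\PP$ from Theorem \ref{thm:unique}, just as the paper does. One small precision in (a): the set where all three $\mathcal D$ are positive is the union of the $2\ZZ^{3}$-translates of $(0,1)^{3}$, so it is the fundamental region only up to a lattice translation, as Theorem \ref{thm:D} phrases it.
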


\begin{proposition}[the quadratic twins]\label{prop:quad}
Each equation has a M\"obius partner of shape\\
$\sum_{i<j}(\cdot)(\cdot)\pm1=0$, at a \emph{negative} parameter:
\begin{align}
  \DD:\quad &\mathcal F_{1}\mathcal F_{2}+\mathcal F_{2}\mathcal F_{3}
             +\mathcal F_{3}\mathcal F_{1}+1=0,
  &&\mathcal F(t)=\frac{\sn\dn}{\cn}
      \Bigl(K[-\tfrac13]\,t,\ -\tfrac13\Bigr),\label{eq:Dquad}\\
  \PP:\quad &\lambda_{1}\lambda_{2}+\lambda_{2}\lambda_{3}
             +\lambda_{3}\lambda_{1}-1=0,
  &&\lambda(t)=\sqrt3\,\sn^{2}\bigl(K[-3]\,t,\ -3\bigr).\label{eq:Pquad}
\end{align}
The link in the $\DD$ case is
\begin{equation}\label{eq:mobius}
  \mathcal F(t)=\frac{1+\mathcal D(t-\tfrac12)}{1-\mathcal D(t-\tfrac12)},
\end{equation}
which converts \eqref{eq:Deq} into \eqref{eq:Dquad}; in the $\PP$ case
$\lambda=\tan\bigl(\frac\pi6-\frac\Psi2\bigr)$ and \eqref{eq:Pquad} is
the statement $\sum\arctan\lambda_{i}=\frac\pi2$.
Equation \eqref{eq:Dquad} is the form obtained in \cite{Dpaper};
\eqref{eq:Pquad} is the additively separable form of
\cite{Ppaper}.
\end{proposition}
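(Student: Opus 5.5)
The plan is to treat the two cases separately. In each case I would split the work into (1) an algebraic step, showing that the Möbius or half-angle substitution turns the equation of Theorem~\ref{thm:D}, resp.\ Proposition~\ref{prop:sep}(b), into the quadratic form, and (2) an analytic step, identifying the substituted function with the stated elliptic function. For (2) I would check that both sides solve the same first-order ODE and agree at one point with the same sign of derivative.

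\emph{The $\DD$ case.} Put $y(t)=\mathcal D(t-\tfrac12)$ and $w=(1+y)/(1-y)$, so that $y=(w-1)/(w+1)$.

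Algebraic step. From $\mathcal D(x)\mathcal D(y)\mathcal D(z)=1$, written in the shifted variables, we get $\prod(w_i-1)=\prod(w_i+1)$. Expanding, the odd-degree terms cancel, leaving $2\sum_{i<j}w_iw_j+2=0$. This is \eqref{eq:Dquad}. The shift by $(\tfrac12,\tfrac12,\tfrac12)$ is a translation, so the surface is unchanged up to rigid motion.

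Analytic step. By \eqref{eq:NitscheE} and the identity quoted after it, $y'^2=K[\tfrac14]^2(1+y^2+y^4)$. Since $y'=2w'/(w+1)^2$ and
\[
(w+1)^4+(w^2-1)^2+(w-1)^4=3w^4+10w^2+3,
\]
we obtain
\[
w'^2=\tfrac34K[\tfrac14]^2\bigl(1+\tfrac{10}{3}w^2+w^4\bigr).
\]
The function $\sn\dn/\cn(c\,t,m)$ satisfies the equation $c^2\bigl(1+(2-4m)w^2+w^4\bigr)$ for its squared derivative. Matching the coefficients forces $2-4m=\tfrac{10}{3}$, i.e.\ $m=-\tfrac13$, and $c=\tfrac{\sqrt3}{2}K[\tfrac14]$. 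The imaginary-modulus transformation $K[-m]=K[m/(1+m)]/\sqrt{1+m}$ gives exactly $K[-\tfrac13]=\sqrt{3/4}\,K[\tfrac14]$.

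Initial data. From \eqref{eq:DCvalues}, $w(0)=0$ since $\mathcal D(-\tfrac12)=-1$, and $w(\tfrac12)=1$. On the other side, $\sn\dn/\cn$ vanishes at $0$ and equals $1$ at the quarter period, because $\sn^2=1/(1+k')$, $\cn^2=k'/(1+k')$, $\dn^2=k'$ there. Both functions are increasing near $0$.

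Uniqueness then gives $w=\mathcal F$ on the interval up to the first pole. Both sides are meromorphic, so the identity extends globally.

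\emph{The $\PP$ case.} Put $\theta_i=\tfrac\pi6-\tfrac12\Psi(x_i)$. By \eqref{eq:Psivalues}, $\theta_i\in[0,\tfrac\pi3]$, so $\lambda_i=\tan\theta_i\in[0,\sqrt3]$ is well defined with no branch issues.

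Algebraic step. The condition $\sum\Psi=0$ is equivalent to $\sum\theta_i=\tfrac\pi2$. For nonnegative angles with each $\theta_i<\tfrac\pi2$, the tangent addition formula $\tan(\theta_1+\theta_2+\theta_3)=(e_1-e_3)/(1-e_2)$ shows that $\sum\theta_i=\tfrac\pi2$ holds exactly when $e_2=1$. Here $e_1,e_2,e_3$ are the elementary symmetric functions of the $\lambda_i$. I would also check that no other value $\tfrac\pi2+\pi\ZZ$ is attainable, which holds because $\sum\theta_i\le\pi$, with equality only at the lattice points of Remark~\ref{rem:exc}.

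Analytic step. I would identify $\tan\bigl(\tfrac\pi6-\tfrac\Psi2\bigr)$ with $\sqrt3\,\sn^2(K[-3]t,-3)$. First, differentiate $\tan\Psi=\sqrt3\,\mathcal C$ using $\cn'=-\sn\dn$ at parameter $\tfrac34$. This gives $\Psi'^2$ as a trigonometric polynomial in $\Psi$, namely the lattice-scaled form of \eqref{eq:odeP}. Next, translate it into a quartic ODE for $\lambda$. Finally, compare with
\[
(s)'^2=4c^2s(1-s)(1-ms),\qquad s=\sn^2(ct,m),\ m=-3.
\]
The boundary values match: $\lambda(0)=0=\sn^2(0)$ and $\lambda(1)=\tan\tfrac\pi3=\sqrt3$, which is $\sqrt3\,\sn^2(K,-3)$. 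The scale matches because $K[-3]=K[\tfrac34]/2$, by the same imaginary-modulus transformation. This is consistent with both $\lambda$ and $\Psi$ having $t$-period $2$.

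\emph{Expected obstacle.} I expect the main obstacle to be this last ODE computation for $\lambda$. It needs careful bookkeeping of the constant $2K[\tfrac34]$ versus $K[-3]$, and of the bounded branch of $\am$ stressed in (ii). I would do it through the half-angle substitution $u=\tan(\Psi/2)$ before applying the $\tfrac\pi6$ rotation.
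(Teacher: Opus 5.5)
Your $\DD$ case follows the paper's own mechanism. The paper combines the M\"obius map with the equation $4G'^{2}=3G^{4}+10G^{2}+3$ of Remark~\ref{rem:anh}, so coefficient matching forces $m=-\tfrac13$, and $K[-\tfrac13]=\tfrac{\sqrt3}{2}K[\tfrac14]$. Your algebra checks out: $\prod(w_i+1)-\prod(w_i-1)=2(e_2+1)$, the quartic $3w^4+10w^2+3$ is right, and so are the initial data.

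For $\PP$ you take a genuinely different route. The paper (step (2) of Section~\ref{sec:method}) gets the identity in closed form, without an ODE:
\begin{itemize}
\item the transformation $m=-3\mapsto\tfrac34$ gives $\sin\Psi=\tfrac{\sqrt3}{2}\cn(2K[-3]t,-3)$ and $\cos\Psi=\tfrac12\dn(2K[-3]t,-3)$;
\item with $s=\sn^2(K[-3]t,-3)$, the duplication formulas at $m=-3$ give $\cos\Psi=\frac{1+6s-3s^2}{2(1+3s^2)}$ and $\sin\Psi=\frac{\sqrt3}{2}\cdot\frac{1-2s-3s^2}{1+3s^2}$;
\item these are the cosine and sine of $\tfrac\pi3-2\arctan(\sqrt3\,s)$, which is literally $\lambda=\sqrt3\,s$, with no uniqueness step needed.
\end{itemize}
Your ODE comparison buys something else. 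It shows that $m=-3$ and the scale are forced, just as $-\tfrac13$ is for $\DD$, and it uses only $K[-3]=\tfrac12K[\tfrac34]$ rather than the full transformation formulas.

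The computation you flagged as the main obstacle closes in a few lines. First, $\Psi'^2=K[\tfrac34]^2(1+2\cos2\Psi)$. With $\Psi=\tfrac\pi3-2\theta$ and $\lambda=\tan\theta$, this becomes $\lambda'^2=K[\tfrac34]^2\,\lambda(\sqrt3-\lambda)(1+\sqrt3\lambda)$. On the other side, $\lambda=\sqrt3\,\sn^2(ct,m)$ gives $\lambda'^2=4c^2\lambda(\sqrt3-\lambda)(1-m\lambda/\sqrt3)$. Matching yields $m=-3$ and $c=\tfrac12K[\tfrac34]=K[-3]$.

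One step needs repair. You anchor the identification at $t=0$ and $t=1$, but these are the turning points. There $\lambda'=0$ and the right-hand side $Q(\lambda)$ vanishes, so your principle ``agree at one point with the same sign of derivative'' does not apply. Indeed $\lambda\equiv0$ also solves $\lambda'^2=Q(\lambda)$ with $\lambda(0)=0$, because $\sqrt{Q}$ is not Lipschitz at $\lambda=0$. Two fixes work:
\begin{itemize}
\item Anchor at $t=\tfrac12$ instead. Both sides equal $1/\sqrt3$ there, $Q(1/\sqrt3)=\tfrac43K[\tfrac34]^2>0$, and both are increasing: $\Psi$ decreases strictly on $[0,1]$, and $\sn^2(\cdot,-3)$ increases on $(0,K[-3])$.
\item Or differentiate to $\lambda''=\tfrac12Q'(\lambda)$, which is Lipschitz, and use the data $\lambda(0)=\lambda'(0)=0$.
\end{itemize}
Either way, real-analyticity of both sides on $\RR$ then extends the identity globally.

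In the algebraic step, use $\cos(\theta_1+\theta_2+\theta_3)=\cos\theta_1\cos\theta_2\cos\theta_3\,(1-e_2)$ with $\cos\theta_i\ge\tfrac12$. This gives the equivalence more cleanly than the tangent formula, which is indeterminate if $e_1=e_3$ as well. Since $\sum\theta_i\in[0,\pi]$, the only admissible value is $\tfrac\pi2$, so the lattice points play no role. The quadratic twin therefore has no exceptional set, consistent with Remark~\ref{rem:exc}.
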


\begin{remark}[$-\tfrac13$ and $-3$]\label{rem:anh}
The two negative parameters in Proposition \ref{prop:quad} are
reciprocal, and they are the two roots of the palindromic quadratic
\begin{equation}\label{eq:palin}
  3m^{2}+10m+3=0,
\end{equation}
which is exactly the quadratic that the Taylor jets of the $\PP$
parametrisation force in \textup{\cite[\S11]{Ppaper}}. The reason is
\eqref{eq:mobius}: the function
$G=\frac{1+\mathcal E}{1-\mathcal E}$ built from
$\mathcal E'^{2}=1+\mathcal E^{2}+\mathcal E^{4}$ satisfies
$4G'^{2}=3G^{4}+10G^{2}+3$. Thus \eqref{eq:palin} is the discriminant
of the M\"obius map relating the product and quadratic shapes, and
$\{-3,-\frac13\}$ are the two negative members of the anharmonic orbit
$\{\frac14,\frac34,4,\frac43,-\frac13,-3\}$ of
$j=35152/9$, on which all the moduli of both surfaces lie.
\end{remark}

\section{Constants}

\begin{table}[ht]
\caption{The parallel. Rows above the rule are the mechanism; rows
below are consequences. $\varphi$ is normalized as in
Theorem \ref{thm:unique}.}\label{tab:par}
\begin{tabular}{@{}p{0.30\textwidth}p{0.31\textwidth}p{0.31\textwidth}@{}}
\toprule
 & $\DD$ (diamond) & $\PP$ (primitive)\\
\midrule
separation constant & $\kappa=+4$ & $\kappa=-4$\\
$\varphi'^{2}$ as a function of $\varphi$ & $1+2\cosh2\varphi$ &
  $1+2\cos2\varphi$\\
range of $\varphi$ & $\RR$ & $[-\frac\pi3,\frac\pi3]$\\
separating function &
  $\log\frac{\sn\dn}{\cn}(\cdot,\frac14)$ &
  $\am(\sqrt3\,\cdot,\frac43)
   =\arctan\bigl(\sqrt3\cn(2\,\cdot,\frac34)\bigr)$\\
addition theorem used & $\exp$ & $\tan$\\
\midrule
implicit equation & $\mathcal D_{1}\mathcal D_{2}\mathcal D_{3}=1$ &
  $\sum\mathcal C_{i}=3\prod\mathcal C_{i}$\\
quadratic twin &
  $\sum_{i<j}\mathcal F_{i}\mathcal F_{j}+1=0$, $m=-\frac13$ &
  $\sum_{i<j}\lambda_{i}\lambda_{j}-1=0$, $m=-3$\\
parameter & $\frac14$ & $\frac34=1-\frac14$\\
lattice / space group & fcc, $Fd\bar3m$ & bcc, $Im\bar3m$\\
cube edge (common norm.) & $2K[\frac14]$ & $2K[\frac34]$\\
patch spanned by & $4$ edges of a regular tetrahedron &
  $4$ edges of a regular octahedron\\
poles / absolute values & yes / needed over $\RR$ and the product form is valid only on a fundamental domain & none / none\\
exceptional set of the equation & none & one lattice orbit
  (Rem.\ \ref{rem:exc})\\
priority & Schwarz, Cayley, Nitsche \cite{Schwarz,Cayley,Nitsche} &
  family: Kim--Ogata \cite{KO}; equation: here\\
\bottomrule
\end{tabular}
\end{table}

\begin{table}[ht]
\caption{All entries derive from $K[1/4]$ and $K[1/9]$.}
\begin{tabular}{@{}lll@{}}
\toprule
quantity & value & role\\
\midrule
$K[1/4]$ & $1.6857503\ldots$ & $\DD$ quarter period; also 
   $p_{0}=\tfrac23K[8/9]$ \cite[\S84]{Nitsche}\\
$K[3/4]=K'[1/4]$ & $2.1565156\ldots$ & $\PP$ quarter period\\
$K[-3]=\frac12K[3/4]=\frac23K[1/9]$ & $1.0782578\ldots$ &
  argument scale in \eqref{eq:Pquad}\\
$K[-1/3]=\frac{\sqrt3}{2}K[1/4]$ & $1.4599026\ldots$ &
  argument scale in \eqref{eq:Dquad}\\
$F[\pi/3\,|\,4/3]=\sqrt3\,K[-3]$ & $1.8675973\ldots$ &
  constant of \cite{KO}; a complete integral in disguise\\
$K[3/4]/K[1/4]$ & $1.2792615\ldots$ &
  Schwarz's scale ratio \eqref{eq:schwarz}\\
$j=35152/9$ & $3905.7777\ldots$ &
  the one elliptic curve (Rem.\ \ref{rem:anh})\\
\bottomrule
\end{tabular}
\end{table}

\section{Method}\label{sec:method}

Proofs are deferred to \cite{Dpaper,Ppaper}; the four steps for
Theorem \ref{thm:P} are short and we indicate them.

\begin{enumerate}[label=\textup{(\arabic*)},leftmargin=2.4em]
\item \emph{The constants.} $F[\pi/3\,|\,4/3]=\frac{\sqrt3}{2}K[3/4]
      =\sqrt3\,K[-3]$: an incomplete integral that is complete in
      disguise, the endpoint $\pi/3$ being exactly $\arcsin(1/\sqrt m)$.
      This is what makes the three amplitudes of the Kim--Ogata
      equation collapse.
\item \emph{Two transformation laws.} Reciprocal parameter
      ($m\mapsto1/m$, taking $\tfrac43$ to $\tfrac34$) and Gauss
      ($m=-3\mapsto\tfrac34$) reduce all three amplitudes to the single
      bounded function $\Psi$ of \eqref{eq:Psi}, with
      $\sin\Psi=\frac{\sqrt3}{2}\cn$, $\cos\Psi=\frac12\dn$ at
      parameter $-3$; the duplication formula for $\cn$ then gives
      $\Psi=\frac\pi3-2\arctan(\sqrt3\sn^{2})$ and a Landen step gives
      the $\tfrac19$ dictionary of \cite{Ppaper}.
\item \emph{Minimality and regularity} of
      $\{\Psi(x)+\Psi(y)+\Psi(z)=0\}$, by the three-line trigonometric
      identity that \eqref{eq:mse} becomes under
      $s_{i}^{2}=\frac13(1+2\cos2\Psi_{i})$; the gradient
      $\propto(\sn(2\cdot),\sn(2\cdot),\sn(2\cdot))$ vanishes only at
      points of $\Lat$, which are not on the surface. This supplies the
      converse direction that \cite[Prop.~4.1]{KO} states but does not
      prove (that paper's Proposition is the implication
      ``$\Sigma$ separable minimal $\Rightarrow$ $\Sigma$ has the
      amplitude form'').
\item \emph{Identification} with the Weierstrass patch of $\PP$
      (Gauss map $G=\zeta$, height differential
      $2\zeta\dd\zeta/\sqrt{1+14\zeta^{4}+\zeta^{8}}$) by Bj\"orling's
      theorem along the straight segment
      $\{(t,-t,\frac12)\}$, on which the two unit normal fields are
      shown to coincide identically. This is the method Nitsche uses in
      \cite[\S85]{Nitsche} to identify \emph{his} two representations
      of $\DD$ along the line $y=z=0$; we simply reuse it.
\end{enumerate}
Steps (1)--(4) also yield the modulus-$(-3)$ form
$\sn(\varpi(x{+}y),-3)\sn(\varpi(x{-}y),-3)
 =\frac13\sn(3\varpi(z-\frac12),\frac19)$, $\varpi=K[-3]$, which is the
form the Weierstrass data produce and the main theorem of
\cite{Ppaper}, together with the local converse proved there.

\section{Quartic Integrals}\label{sec:add}

The identification \eqref{eq:NitscheE} is one instance of a
one-parameter family of quartic integrals, which we record because it
supplies, in a single stroke, Nitsche's constant, the period of
$\PP$, and the normalizing constant of \cite{Ppaper}.

\begin{lemma}[integration]\label{lem:quartic}
	For $m<1$ put $T_{m}:=\dfrac{\sn\dn}{\cn}(\cdot\,,m)$. Then
	\begin{equation}\label{eq:Tode}
	T_{m}'=1-2m\sn^{2}+T_{m}^{2}>0,
	\qquad\text{hence}\qquad
	T_{m}'^{\,2}=1+(2-4m)\,T_{m}^{2}+T_{m}^{4},
	\end{equation}
	and $T_{m}$ is an increasing bijection from $[0,K[m])$ onto
	$[0,\infty)$. Consequently
	\begin{equation}\label{eq:quartic}
	\boxed{\ \int_{0}^{\infty}
		\frac{\dd\tau}{\sqrt{\tau^{4}+(2-4m)\,\tau^{2}+1}}=K[m]
		\qquad (m<1).\ }
	\end{equation}
\end{lemma}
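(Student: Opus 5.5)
The plan is to differentiate $T_m=\sn\dn/\cn$ directly, using the standard derivatives $\sn'=\cn\dn$, $\cn'=-\sn\dn$, $\dn'=-m\sn\cn$, and then to integrate by substitution.

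\textbf{The first-order identity.} By the quotient rule,
\[
T_m'=\frac{(\cn\dn\cdot\dn-m\sn^2\cn)\cn+\sn\dn\cdot\sn\dn}{\cn^2}
=\dn^2-m\sn^2+\frac{\sn^2\dn^2}{\cn^2}.
\]
Since $\dn^2=1-m\sn^2$, this is $1-2m\sn^2+T_m^2$, which is the first identity in \eqref{eq:Tode}.

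\textbf{Positivity.} For $m\le 1/2$ the term $1-2m\sn^2$ is already at least $1-2m\sn^2\ge\min(1,1-2m)\ge0$, with equality only in degenerate cases. For general $m<1$ I would instead write
\[
T_m'=\frac{\dn^2\cn^2-m\sn^2\cn^2+\sn^2\dn^2}{\cn^2}
\]
and simplify the numerator. Using $\cn^2=1-\sn^2$ and $\dn^2=1-m\sn^2$ with $s=\sn^2$, the numerator is
\[
(1-ms)(1-s)-ms(1-s)+s(1-ms)=1-2ms+ms^2=(1-ms)^2+ms^2(1-m)\cdot\tfrac{1}{1}\ \text{(check)}.
\]
Indeed $1-2ms+m s^2=(1-ms)^2+ms^2-m^2s^2=(1-ms)^2+m(1-m)s^2$. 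For $0\le m<1$ both terms are nonnegative and the first is positive because $ms<1$. For $m<0$ the expression $1-2ms+ms^2=1+|m|s(2-s)$ is at least $1$ since $s\in[0,1]$. So $T_m'>0$ wherever $\cn\ne0$.

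\textbf{The quartic form.} Squaring, I need $(1-2ms+T^2)^2=1+(2-4m)T^2+T^4$, which is equivalent to
\[
(1-2ms)^2-1=-4mT^2+4msT^2 .
\]
The right side equals $-4m(1-s)T^2=-4m\cn^2T^2=-4ms\dn^2=-4ms(1-ms)$, and the left side is $-4ms+4m^2s^2$. These agree, so \eqref{eq:Tode} holds. I would present this step as "eliminate $\sn^2$ via $T^2\cn^2=\sn^2\dn^2$".

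\textbf{The bijection and the integral.} On $[0,K[m])$ we have $\cn>0$, $T_m(0)=0$, and $T_m$ is strictly increasing. At $u\to K^-$ we have $\cn\to0^+$, $\sn\to1$, and $\dn\to\sqrt{1-m}>0$, so $T_m\to+\infty$. This holds also for $m<0$, where $K$ is finite and $\cn$ still has its first zero at $K$. Hence $T_m$ is a bijection onto $[0,\infty)$. Substituting $\tau=T_m(u)$ and using $\dd\tau=T_m'\dd u$ with $T_m'=+\sqrt{\tau^4+(2-4m)\tau^2+1}$ (the positive root, by positivity) gives \eqref{eq:quartic}. The integral converges because the integrand is $O(\tau^{-2})$ and the quartic is positive.

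\textbf{Main obstacle.} The only real subtlety is the positivity of $T_m'$ for $1/2<m<1$, which is needed to take the positive square root. The completed-square form above handles it. I would also note that this positivity guarantees the quartic is positive on $[0,\infty)$ even though $2-4m<0$ in that range.
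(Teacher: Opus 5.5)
Your proof is correct and follows the paper's argument step for step. You use the same quotient-rule computation giving $T_m'=1-2ms+T_m^2=(1-2ms+ms^2)/(1-s)$, the same elimination via $T_m^2\cn^2=\sn^2\dn^2$ to verify the quartic, and the same substitution $\tau=T_m(u)$ over $[0,K[m])$. The differences are cosmetic: you prove $1-2ms+ms^2>0$ by writing it as $(1-ms)^2+m(1-m)s^2$ instead of using the discriminant $4m(m-1)$, and you get positivity of the quartic on $[0,\infty)$ from the bijection instead of from $2-4m>-2$; you should also delete the stray ``$\cdot\tfrac11$ (check)'' line.
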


\begin{proof}
	Write $S=\sn(u,m)$, $C=\cn$, $D=\dn$ and $s=S^{2}$, so that
	$C^{2}=1-s$, $D^{2}=1-ms$ and $T_{m}^{2}=s(1-ms)/(1-s)$. From
	$S'=CD$, $C'=-SD$, $D'=-mSC$,
	\[
	T_{m}'=\frac{CD\cdot D+S(-mSC)}{C}+\frac{SD\cdot SD}{C^{2}}
	=\bigl(D^{2}-mS^{2}\bigr)+T_{m}^{2}
	=1-2ms+T_{m}^{2},
	\]
	which is the first part of \eqref{eq:Tode}; clearing denominators,
	\[
	T_{m}'=\frac{(1-2ms)(1-s)+s(1-ms)}{1-s}
	=\frac{1-2ms+ms^{2}}{1-s}>0,
	\]
	the numerator being positive for every $s\in[0,1]$ because its
	discriminant in $s$ is $4m(m-1)<0$ when $0<m<1$, while for $m\le0$ one
	has $ms(s-2)\ge0$. Squaring $T_{m}'=1-2ms+T_{m}^{2}$ and subtracting
	the asserted quartic leaves
	$4m\bigl[ms^{2}-s+(1-s)T_{m}^{2}\bigr]=0$, which holds identically by
	the closed form of $T_{m}^{2}$; this is the second part of
	\eqref{eq:Tode}. Since $m<1$ we have $D\ge\sqrt{1-m}>0$ on
	$[0,K[m]]$, so $T_{m}\to+\infty$ as $u\uparrow K[m]$, where $C$ has its
	first zero; with $T_{m}(0)=0$ and $T_{m}'>0$ this gives the asserted
	bijection. Finally $2-4m>-2$, so $\tau^{4}+(2-4m)\tau^{2}+1>0$ for real
	$\tau$, and \eqref{eq:Tode} may be written
	$\dd u=\dd T_{m}\bigl/\sqrt{T_{m}^{4}+(2-4m)T_{m}^{2}+1}$; integrating
	over $u\in[0,K[m])$ yields \eqref{eq:quartic}.
	
	Alternatively, and without elliptic functions: the substitution
	$\nu=\tau-\tau^{-1}$ gives
	$\tau^{4}+(2-4m)\tau^{2}+1=\tau^{2}\bigl(\nu^{2}+4-4m\bigr)$ and
	$\dd\tau/\tau=\dd\nu/\sqrt{\nu^{2}+4}$, whence the left side of
	\eqref{eq:quartic} equals
	\[
	\int_{-\infty}^{\infty}
	\frac{\dd\nu}{\sqrt{(\nu^{2}+4-4m)(\nu^{2}+4)}}=K[m],
	\]
	by $\int_{0}^{\infty}\dd\nu\bigl/\sqrt{(\nu^{2}+a^{2})(\nu^{2}+b^{2})}
	=b^{-1}K[1-a^{2}/b^{2}]$ for $0<a\le b$ (substitute
	$\nu=a\tan\vartheta$), together with, when $m<0$, the 
	relation $K[m]=(1-m)^{-1/2}K[-m/(1-m)]$.
\end{proof}

\begin{table}[ht]
	\caption{Special cases of Lemma \ref{lem:quartic}. The two cube edges
		of Table \ref{tab:par} are the two signs of the same
		integral.}\label{tab:quartic}
	\begin{tabular}{@{}llll@{}}
		\toprule
		$m$ & $2-4m$ & $K[m]$ & occurrence\\
		\midrule
		$0$ & $2$ & $\pi/2$ & degenerate check\\
		$\tfrac14$ & $1$ & $1.6857503\ldots$ &
		Nitsche's $p_{0}=\tfrac23K[\tfrac89]$
		\textup{\cite[\S84]{Nitsche}}; half the $\DD$ cube edge\\
		$\tfrac12$ & $0$ & $\tfrac14B(\tfrac14,\tfrac14)=1.8540746\ldots$ &
		classical lemniscatic case\\
		$\tfrac34$ & $-1$ & $2.1565156\ldots$ &
		half the $\PP$ cube edge \eqref{eq:periods}\\
		$-3$ & $14$ & $1.0782578\ldots$ &
		$\varpi=K[-3]$, the normalising constant of \cite{Ppaper}\\
		\bottomrule
	\end{tabular}
\end{table}

\begin{remark}\label{rem:quartic}
	Three consequences. \textup{(i)} At $m=\tfrac14$ the quartic is
	$\tau^{4}+\tau^{2}+1$ and \eqref{eq:quartic} is Nitsche's evaluation,
	so the argument $K[\tfrac14]\,t$ in \eqref{eq:DC} is exactly his
	$p_{0}t$ and the cube edge $2K[\tfrac14]$ is his cube of side
	$2p_{0}$. \textup{(ii)} At $m=\tfrac34$ the quartic is
	$\tau^{4}-\tau^{2}+1$: the $\DD$ and $\PP$ periods are the two signs of
	one integral, which is the arithmetic shadow of the $\cosh/\cos$
	dichotomy of Theorem \ref{thm:unique}. \textup{(iii)} At $m=-3$ the
	coefficient is $2-4(-3)=14$ --- the same $14$ as in the Weierstrass
	polynomial $1+14\zeta^{4}+\zeta^{8}$ shared by the two surfaces
	\textup{\cite[\S85]{Nitsche}}, and \eqref{eq:quartic} then reads
	$\int_{0}^{\infty}(1+14\tau^{2}+\tau^{4})^{-1/2}\dd\tau=\varpi$, which
	is how $\varpi$ arises in \cite{Ppaper}.  See related work in \cite{Ross}.
\end{remark}

\section{Credit, and what is claimed}\label{sec:credit}

Separable minimal surfaces were introduced by Weingarten \cite{Wein};
Cayley \cite{Cayley} determined the class defined by
\eqref{eq:Dquad}-type equations, and Fr\'echet \cite{Frechet} returned
to the problem. Nitsche's \emph{Lectures}
\cite[\S\S82--86]{Nitsche} contain the reduction \eqref{eq:mse}, the
separation constant $\kappa$, the diamond equation \eqref{eq:Deq}
together with its M\"obius twin, the correct Weierstrass data
$P(\omega)=-(1+14\omega^{4}+\omega^{8})$, and the identification of the
implicit and parametric descriptions of $\DD$. \textbf{Theorem
\ref{thm:D} is therefore classical, and priority for the diamond
equation belongs to Schwarz, Cayley and Nitsche.} Nitsche set up the
case $\kappa<0$ but worked out no example in it; every surface exhibited
in \cite[\S\S83--86]{Nitsche} has $\kappa=4$ or $\kappa=0$. Kim \&
Ogata \cite{KO} classified all separable minimal surfaces and were, as far
as we know, the first to treat $\kappa<0$, obtaining the two-parameter
amplitude family and asserting that it contains $\PP$.

Against that background we claim: the closed form
\eqref{eq:NitscheE} of Nitsche's function (the constant 
$p_0=\frac23K[8/9]$ appears in \textup{\cite[\S84]{Nitsche}});
the trilinear equation \eqref{eq:Peq} for $\PP$ with its exceptional
set, the specification of the branch of $\am$ at parameter $>1$
(without which the amplitude equation is ambiguous, and false in the
naive reading), and the exact constants; the identification of the
member of \cite{KO} that is $\PP$, hence a proof of
\cite[Thm.~1(3)]{KO}; the uniqueness statement
Theorem \ref{thm:unique} with its $\cosh$/$\cos$ dichotomy; the
emergence of Schwarz's constant \eqref{eq:schwarz} from the common
normalization; and the bridge from both equations to the Weierstrass
data, carried out in \cite{Dpaper,Ppaper}. Exact but
\emph{parametric} descriptions of $\DD$, $\PP$ and the gyroid are due
to Gandy, Klinowski and coauthors \cite{GKD,GKP,GKG}.

\begin{problem}
The nodal approximations $\cos\pi x+\cos\pi y+\cos\pi z=0$ of $\PP$ and
$\sin\pi x\sin\pi y\sin\pi z+\cdots=0$ of $\DD$ are the degenerate
($m\to0$) models of \eqref{eq:Peq} and \eqref{eq:Deq}. Compare them
harmonic by harmonic; $\mathcal C$ is an even $2$-antiperiodic function
whose Fourier series begins with $\cos\pi t$.
\end{problem}

\begin{problem}
Which members of the associate family joining $\DD$ to $\PP$ admit
exact implicit representations? None can be separable, by
Theorem \ref{thm:unique}; the gyroid, at Bonnet angle
$38.0147\ldots^{\circ}$ \cite{GKG}, is the natural test case, and a
genuinely different mechanism is required.
\end{problem}

\begin{problem}
	Both equations are \emph{trilinear}: the variables 
	$\mathcal D_{i}$ and $\mathcal C_{i}$ each have degree at most $1$ in 
	each term. Is trilinearity forced? Precisely:
	if $\Sigma\subset\RR^3$ is a triply periodic minimal surface admitting
	an implicit equation $\sum_{\varepsilon\in\{0,1\}^3}
	c_\varepsilon\,F(x)^{\varepsilon_1}F(y)^{\varepsilon_2}
	F(z)^{\varepsilon_3}=0$ for one elliptic $F$ and constants
	$c_\varepsilon$, must $\Sigma$ be $D$ or $P$? Theorem \ref{thm:unique} 
	answers this for the separable subclass, where the only trilinear 
	shapes are $\prod \mathcal D_{i}=1$ and 
	$\sum \mathcal C_{i}=3\prod \mathcal C_{i}$ together with their
	M\"obius partners \eqref{eq:Dquad} and \eqref{eq:Pquad}. Might 
	Theorem \ref{thm:unique} be true for the (potentially larger) 
	trilinear subclass?
\end{problem}

\begin{problem}
	The two lattice periods $2K[\frac14]$, $2K[\frac34]$ are the two signs
	of one quartic integral (Table \ref{tab:quartic}), and the anharmonic 
	orbit of $j=35152/9$ carries all six moduli of both surfaces. Does the 
	elliptic curve $j=35152/9$ have a modular interpretation accounting 
	for the $14$ in $1+14\zeta^4+\zeta^8$ --- equivalently, for $2-4m=14$ 
	at $m=-3$ (Remark \ref{rem:quartic}(iii))?
\end{problem}

\subsection*{Acknowledgements}
I thank Yuta Ogata for correspondence and for a clarifying note on
\cite{KO}. Anthropic Claude Opus 5.0 played a crucial role in turning 
my conjectures into theorems: to say that the AI model merely ``assisted'' 
would be an understatement.  My interest in minimal surfaces 
began years ago (see \cite{Finch}).

\pagebreak


\begin{thebibliography}{99}
\bibitem{BF} P.\,F. Byrd and M.\,D. Friedman, \emph{Handbook of
  Elliptic Integrals for Engineers and Scientists}, 2nd ed., Springer,
  1971.
\bibitem{Cayley} A. Cayley, \emph{On a special surface of minimum
  area}, Quart.\ J.\ Pure Appl.\ Math.\ \textbf{14} (1877) 190--196.
\bibitem{Frechet} M. Fr\'echet, \emph{D\'etermination des surfaces
  minima du type $a(x)+b(y)=c(z)$}, Rend.\ Circ.\ Mat.\ Palermo
  \textbf{5} (1956) 238--259; \textbf{6} (1957) 5--32.
\bibitem{GKD} P.\,J.\,F. Gandy, D. Cvijovi\'c, A.\,L. Mackay and
  J. Klinowski, \emph{Exact computation of the triply periodic D
  (``diamond'') minimal surface}, Chem.\ Phys.\ Lett.\ \textbf{314}
  (1999) 543--551.
\bibitem{GKP} P.\,J.\,F. Gandy and J. Klinowski, \emph{Exact
  computation of the triply periodic Schwarz P minimal surface},
  Chem.\ Phys.\ Lett.\ \textbf{322} (2000) 579--586.
\bibitem{GKG} P.\,J.\,F. Gandy and J. Klinowski, \emph{Exact
  computation of the triply periodic G (``gyroid'') minimal surface},
  Chem.\ Phys.\ Lett.\ \textbf{321} (2000) 363--371.
\bibitem{HL} T. Hasanis and R. L\'opez, \emph{Classification of
  separable surfaces with constant Gaussian curvature},
  Manuscripta Math.\ \textbf{166} (2021) 403--417.
\bibitem{KO} D. Kim and Y. Ogata, \emph{Separable minimal surfaces and
  their limit behavior}, J. Korean Math.\ Soc.\ \textbf{61} (2024),
  no.~4, 761--778.
\bibitem{Nitsche} J.\,C.\,C. Nitsche, \emph{Lectures on Minimal
  Surfaces}, Vol.~1, Cambridge Univ.\ Press, 1989, \S\S82--86.
\bibitem{Ross} M. Ross, \emph{$\PP$ and $\DD$ surfaces are stable},    
Differential Geom. Appl.\ \textbf{2} (1992), no.~2, 179--195.
\bibitem{Schwarz} H.\,A. Schwarz, \emph{Gesammelte Mathematische
  Abhandlungen}, Band I, Springer, 1890 (pp.~1--125; p.~88 for
  \eqref{eq:schwarz}).
\bibitem{Wein} J. Weingarten, \emph{Ueber die durch eine Gleichung von
  der Form $x+y+z=0$ darstellbaren Minimalfl\"achen}, G\"ott.\ Nachr.\
  \textbf{1887} (1887) 272--275.
\bibitem{Dpaper} S. R. Finch, \emph{Elimination of the parameters in
  an elliptic parametrisation of the Schwarz diamond surface}, in
  preparation. 
\bibitem{Ppaper} S. R. Finch, \emph{Elimination of the parameters in
  an elliptic parametrisation of the Schwarz primitive surface}, in
  preparation. 
\bibitem{Finch} S. R. Finch, \emph{Mathematical Constants II}, Cambridge Univ.\ 
Press, 2019; \S5.17, Gergonne-Schwarz surface; \S5.18, Partitioning problem.
\end{thebibliography}
\end{document}